
\documentclass{amsart}
\usepackage[english]{babel}
\usepackage{amsfonts, amsmath, amsthm, amssymb,amscd,indentfirst}

\usepackage{esint}
\usepackage{graphicx}
\usepackage{epstopdf}
\usepackage{amsmath,amssymb,latexsym,indentfirst}
\usepackage{times}
\usepackage{palatino}
\usepackage[colorlinks]{hyperref}

\newtheorem{theorem}{Theorem}[section]

\newtheorem{definition}[theorem]{Definition}
\newtheorem{corollary}[theorem]{Corollary}
\newtheorem{remark}[theorem]{Remark}


\def\d{\partial}

\def\bp{\begin{proof}}
\def\ep{\end{proof}}








\def\d{\partial}

\begin{document}

\title[Rigidity on an eigenvalue problem with mixed boundary condition]{Rigidity on an eigenvalue problem with mixed boundary condition}

\author{\textsc{S\'{e}rgio Almaraz}}
\address{Universidade Federal Fluminense (UFF), Instituto de Matem\'{a}tica, Campus do Gragoat\'a\\
              Rua Prof. Marcos Waldemar de Freitas S/N ,  24210-201, Niter\'{o}i-RJ, Brazil.}
              \email{sergio.m.almaraz@gmail.com}
\author{\textsc{Ezequiel Barbosa}}
\address{Universidade Federal de Minas Gerais (UFMG), Departamento de Matem\'{a}tica, Caixa Postal 702, 30123-970, Belo Horizonte-MG, Brazil.}
\email{ezequiel@mat.ufmg.br}
\thanks{The two authors were partially supported by  CNPq/Brazil grants.}

\begin{abstract}
We prove an Obata-type rigidity result for the spherical cap and apply it for an eigenvalue problem with mixed boundary condition.

MSC classes: 53C24, 35P15.
\end{abstract}

\maketitle

\section{Introduction and statements of the results}\label{int}

Let $(M^n,g)$ be a compact Riemannian manifold, with a possibly non-empty boundary $\Sigma$.  We denote by $\text{Ric}_g$ and $R_g$ respectively the Ricci and scalar curvatures of $(M, g)$. When $\Sigma\neq \emptyset$, we also assume that $\Sigma$ is oriented by an outward pointing unit normal vector $\nu$, so that  its mean curvature is $H_{\Sigma}=\text{div}_g\nu$.

The classical Obata's rigidity theorem is a very well-known result with many important applications:

\begin{theorem}[\cite{O}]\label{thm:obata}
A connected closed Riemannian manifold $(M^n,g)$ is isometric to the standard unit sphere $(\mathbb{S}^n,g_0)$ if and only if there is a non-constant function $\phi$ satisfying
\[
\nabla^2 \phi+\phi g=0,
\]
where $\nabla^2$ is the Hessian with respect to the metric $g$.
\end{theorem}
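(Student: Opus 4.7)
The ``if'' direction is immediate: on $(\mathbb{S}^n, g_0) \subset \mathbb{R}^{n+1}$, the restriction of any linear coordinate satisfies $\nabla^2\phi + \phi g_0 = 0$. For the converse, my first step is to extract two basic consequences of the equation. Tracing $\nabla^2\phi + \phi g = 0$ gives $\Delta\phi = -n\phi$, and for any vector $X$,
\[
X(|\nabla\phi|^2 + \phi^2) = 2(\nabla^2\phi)(X, \nabla\phi) + 2\phi\, X\phi = -2\phi\, g(X, \nabla\phi) + 2\phi\, g(X, \nabla\phi) = 0,
\]
so $|\nabla\phi|^2 + \phi^2$ is constant on $M$. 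After rescaling $\phi$ I take this constant to be $1$; then $\phi(M) \subseteq [-1, 1]$ and the critical set of $\phi$ coincides with $\{\phi = \pm 1\}$.

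Fix a maximum $p$ of $\phi$, so $\phi(p) = 1$. For any unit-speed geodesic $\gamma$ with $\gamma(0) = p$, the function $f(t) := \phi(\gamma(t))$ satisfies $f''(t) = (\nabla^2\phi)(\gamma'(t), \gamma'(t)) = -f(t)$ with $f(0) = 1$, $f'(0) = 0$, hence $f(t) = \cos t$. Applying this with $\gamma$ a minimizing geodesic to an arbitrary point of $M$ yields $\phi = \cos d(p, \cdot)$ on all of $M$. The same analysis starting at a minimum $q$ (where $\phi = -1$) gives $\phi = -\cos d(q, \cdot)$; comparing the two formulas forces $\{\phi = -1\}$ to be the single point $q$, so every unit-speed geodesic from $p$ reaches $q$ at time $\pi$ and the diameter of $M$ equals $\pi$.

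To produce the isometry with $\mathbb{S}^n$, I compare Jacobi fields along geodesics from $p$. Differentiating $\phi\circ\gamma = \cos t$ and using $|\nabla\phi|^2 + \phi^2 = 1$ yields $\nabla\phi = -\sin(t)\,\gamma'$ along $\gamma$ for $t \in (0, \pi)$, so every $2$-plane through $\gamma'$ coincides with a $2$-plane through $\nabla\phi$. Commuting covariant derivatives in the third derivative of $\phi$ and invoking the Hessian equation produces the Riemann identity $R(X, Y)\nabla\phi = g(\nabla\phi, Y)X - g(\nabla\phi, X)Y$; pairing with $X$ orthogonal to $\nabla\phi$ shows that the sectional curvature of every $2$-plane containing $\nabla\phi$ equals $1$. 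Hence any normal Jacobi field $J$ along $\gamma$ with $J(0) = 0$ satisfies $J'' + J = 0$, giving $|J(t)| = |J'(0)|\sin t$, exactly as on $\mathbb{S}^n$. A linear isometry $T_N\mathbb{S}^n \to T_pM$ therefore intertwines the exponential maps and yields the required Riemannian isometry $F: \mathbb{S}^n \to M$.

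The main obstacle is this last step: promoting the pointwise identification via matching Jacobi data to a genuine global isometry, including smoothness at the antipodal pole $q$ (where the polar-coordinate singularity appears). This is handled by a Cartan--Ambrose--Hicks type argument together with the fact, established above, that every unit-speed geodesic from $p$ actually hits $q$ at time $\pi$, so that $F$ is well defined, smooth, and an isometry on all of $\mathbb{S}^n$.
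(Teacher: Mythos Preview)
Your argument is correct, and it diverges from the paper's in the second half, where the isometry with $\mathbb{S}^n$ is actually produced. Both proofs open the same way: the ODE $f''+f=0$ along geodesics, the normalization $\phi(P_+)=1$, the identification $\phi=\cos d(p,\cdot)$, and the fact that the minimum set is a single point. (The paper obtains the last item by a Morse--lemma/connectedness argument on the level set $M_\pi$, whereas you use the dual formula $\phi=-\cos d(q,\cdot)$; both work.) After that the two proofs part ways. You differentiate the Hessian equation once more to obtain the curvature identity $R(X,Y)\nabla\phi=g(\nabla\phi,Y)X-g(\nabla\phi,X)Y$, read off that all radial sectional curvatures equal $1$, solve the Jacobi equation $J''+J=0$ to see $|J(t)|=|J'(0)|\sin t$, and then invoke a Cartan--Ambrose--Hicks type statement to globalize. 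The paper never computes curvature or Jacobi fields; instead it builds the candidate map $h:M\to\mathbb{S}^n$ directly by matching geodesic polar coordinates at $P_+$ and $\overline P_+$, and proves $|dh(u)|=|u|$ by a bare-hands parallel-transport computation: from $\nabla^2\phi=-\phi g$ it derives $[\nabla s,u_s]=-\cot s\,u_s$ for a parallel normal field $u_s$, integrates to $\sin s\,u_s(F)=u_{\pi/2}(F)$, and matches this against the identical relation on $\mathbb{S}^n$. Your route is more conceptual and shorter on the page, but it leans on the CAH black box and leaves the antipodal smoothness to that machinery (as you note). The paper's route is more elementary and self-contained, at the cost of a somewhat intricate transport calculation; it also makes the injectivity of $\exp_{P_+}$ on $B_\pi$ explicit (its Claim~3), a point you are implicitly relying on when you say the exponential maps are ``intertwined'' but do not isolate as a separate step.
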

\begin{corollary}
If a connected closed Riemannian manifold $(M^n,g)$ satisfies 
$$\text{Ric}_g\geq (n-1)g,$$ 
then the first eigenvalue $\lambda_1(g)$ of the Laplacian $\Delta_g$ satisfies
$\lambda_1(g)\geq n$, and equality holds if and only if $(M,g)$ is isometric to $(\mathbb{S}^n,g_0)$.
\end{corollary}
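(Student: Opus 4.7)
The plan is to use the standard Lichnerowicz--Obata argument, combining the Bochner identity with Theorem~\ref{thm:obata} for the rigidity conclusion.

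First, I would pick a first eigenfunction $\phi$, normalized so that $\Delta_g \phi = -\lambda_1 \phi$, and apply the Bochner formula
\[
\tfrac12 \Delta_g |\nabla \phi|^2 = |\nabla^2 \phi|^2 + \langle \nabla \phi, \nabla \Delta_g \phi\rangle + \text{Ric}_g(\nabla\phi,\nabla\phi).
\]
Integrating over the closed manifold $M$, the left-hand side vanishes, and using $\int_M \langle \nabla \phi,\nabla \Delta_g\phi\rangle = -\lambda_1 \int_M |\nabla\phi|^2$ together with the integration-by-parts identity $\int_M |\nabla\phi|^2 = \lambda_1 \int_M \phi^2$, one obtains
\[
0 = \int_M |\nabla^2 \phi|^2 - \lambda_1^2 \int_M \phi^2 + \int_M \text{Ric}_g(\nabla \phi,\nabla\phi).
\]

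Next, I would insert two pointwise inequalities: the algebraic Cauchy--Schwarz bound $|\nabla^2 \phi|^2 \ge (\Delta_g \phi)^2/n = \lambda_1^2 \phi^2/n$, and the hypothesis $\text{Ric}_g(\nabla\phi,\nabla\phi) \ge (n-1)|\nabla\phi|^2$. Combining these yields
\[
0 \ge \lambda_1 \left(\tfrac{\lambda_1}{n} - \lambda_1 + (n-1)\right) \int_M \phi^2 = \tfrac{(n-1)(n-\lambda_1)}{n}\,\lambda_1 \int_M \phi^2,
\]
and since $\phi$ is non-constant, $\lambda_1 > 0$ and $\int_M \phi^2 > 0$, forcing $\lambda_1 \ge n$.

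For the rigidity statement, equality $\lambda_1 = n$ forces equality in both pointwise inequalities above. In particular $|\nabla^2\phi|^2 = (\Delta_g\phi)^2/n$, which happens only when $\nabla^2\phi$ is a multiple of $g$; computing the trace identifies this multiple, giving $\nabla^2 \phi = -\phi\, g$. Theorem~\ref{thm:obata} then yields that $(M,g)$ is isometric to $(\mathbb{S}^n,g_0)$. The converse is routine since on the standard sphere $\text{Ric}_{g_0} = (n-1)g_0$ and the coordinate functions of $\mathbb{R}^{n+1}$ restricted to $\mathbb{S}^n$ are eigenfunctions of $\Delta_{g_0}$ with eigenvalue $n$. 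The only mild subtlety is the sign/normalization bookkeeping between $\Delta$, $\lambda_1$, and the Hessian multiple $\lambda_1/n = 1$, but no genuine obstacle arises.
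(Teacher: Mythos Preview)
Your argument is correct and is precisely the classical Lichnerowicz--Obata proof: integrate the Bochner formula, use the Cauchy--Schwarz inequality $|\nabla^2\phi|^2\ge(\Delta_g\phi)^2/n$ together with the Ricci lower bound to obtain $\lambda_1\ge n$, and in the equality case conclude $\nabla^2\phi=-\phi g$ and invoke Theorem~\ref{thm:obata}. The paper does not actually supply its own proof of this corollary; it is listed in the introduction as a well-known consequence of Obata's theorem, so there is nothing to compare against beyond noting that your approach is the standard one the authors implicitly have in mind.
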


To the best of our knowledge, there are two versions of those results for manifolds with non-empty boundary: 

\begin{theorem}[\cite{Re1, Re2}]
A connected compact Riemannian manifold $(M^n,g)$ with non-empty boundary $\Sigma$  is isometric to the standard unit hemisphere $(\mathbb{S}^n_+,g_0)$ if and only if there is a non-constant function $\phi$ satisfying
\[
\begin{cases}
\nabla^2 \phi+\phi g=0\:\:\:\text{in}\:M,
\\
\phi= \text{const.\:\:\:on\:} \Sigma.
\end{cases}
\]
\end{theorem}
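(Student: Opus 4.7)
The plan is to handle both directions of the biconditional, with essentially all the work going into the converse. For the forward direction, on the unit hemisphere $(\mathbb{S}^n_+,g_0)\subset\mathbb{R}^{n+1}$ the height function $\phi(x) = x_{n+1}$ is the restriction of a linear function on $\mathbb{R}^{n+1}$; a direct extrinsic computation using the relation between the intrinsic and ambient Hessians on $\mathbb{S}^n$ yields $\nabla^2\phi + \phi g_0 = 0$, and $\phi \equiv 0$ on the equatorial boundary, so the non-constant function $\phi$ satisfies both conditions with boundary constant $0$.

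For the converse, suppose $\phi$ is non-constant, solves $\nabla^2\phi + \phi g = 0$, and $\phi|_{\Sigma} = c$. A direct computation using the Hessian equation gives
\[
\nabla_k\bigl(|\nabla\phi|^2 + \phi^2\bigr) = 2(\nabla^2\phi)_{kj}\nabla^j\phi + 2\phi\nabla_k\phi = 0,
\]
so $|\nabla\phi|^2 + \phi^2 \equiv a^2$ is a positive constant on $M$. In particular $\phi$ has no critical points outside the level sets $\{\phi = \pm a\}$, and because $\phi$ is non-constant while $\phi|_\Sigma$ is a single value, at least one of $\max\phi$ or $\min\phi$ must be attained at an interior critical point; after replacing $\phi$ by $-\phi$ if needed, I fix an interior maximum $p$ with $\phi(p) = a$, and note $c < a$.

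Next I would work in Riemannian normal coordinates $(r,\theta)$ around $p$. Along each unit-speed geodesic $\gamma$ from $p$, the function $f(t) = \phi(\gamma(t))$ satisfies $f''(t) = (\nabla^2\phi)(\gamma',\gamma') = -f(t)$ with $f(0) = a$ and $f'(0) = 0$, forcing $\phi(r,\theta) = a\cos r$ throughout the chart. Writing the metric as $g = dr^2 + g_{ij}(r,\theta)\,d\theta^i d\theta^j$ (the $dr$-$d\theta$ cross terms vanishing by the Gauss lemma), the purely angular components of $\nabla^2\phi + \phi g = 0$ reduce to the ODE
\[
\partial_r g_{ij}(r,\theta) = 2\cot(r)\,g_{ij}(r,\theta),
\]
which together with the standard asymptotic $g_{ij}(r,\theta)/r^2 \to \hat g_{ij}(\theta)$ as $r\to 0^+$ (where $\hat g$ is the round metric on $S^{n-1}$) integrates to $g_{ij}(r,\theta) = \sin^2(r)\,\hat g_{ij}(\theta)$. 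Hence on the chart $g = dr^2 + \sin^2(r)\,\hat g$, the standard metric of the unit round $n$-sphere.

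To globalize, I would introduce the function $\rho := \arccos(\phi/a)$, which is smooth on $M\setminus\{p\}$ because $|\nabla\phi|^2 = a^2-\phi^2 > 0$ away from $p$; a direct calculation yields $|\nabla\rho|\equiv 1$, so $\rho$ coincides with the distance function to $p$ and its gradient flow unit-speed-reparameterizes the radial geodesics, covering all of $M\setminus\{p\}$. The boundary hypothesis $\phi|_\Sigma = c$ becomes $\rho|_\Sigma \equiv r_* := \arccos(c/a)$, which lies in $(0,\pi)$ since $-a<c<a$, so $\exp_p$ is a diffeomorphism of $\overline{B(0,r_*)}\subset T_pM$ onto $M$ and identifies $(M,g)$ isometrically with a closed spherical cap of $\mathbb{S}^n$ of angular radius $r_*$; in particular the case $c=0$ yields $r_* = \pi/2$ and $(M,g)\cong(\mathbb{S}^n_+,g_0)$, as claimed. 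I expect the main technical point to be verifying that the normal chart really covers all of $M$ before $\exp_p$ develops cut points — this is forced by $|\nabla\rho|\equiv 1$ combined with $\rho \le r_* < \pi$, which rules out self-intersections of the gradient flow and extends the round-metric identification all the way up to $\Sigma$.
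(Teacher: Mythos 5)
Your overall strategy is sound and, notably, it is not Reilly's (the paper only cites \cite{Re1,Re2} for this statement; Reilly's own arguments go through integral identities). What you propose is essentially the same ODE-plus-exponential-map argument that the paper uses in Section 2 to prove the more general Theorem \ref{main:thm}: interior extremum $p$, $\phi=a\cos r$ along radial geodesics, the Obata computation $\partial_r g_{ij}=2\cot(r)g_{ij}$ forcing $g=dr^2+\sin^2(r)\hat g$ on the normal chart, and the boundary condition fixing the radius $r_*$. Up to that point your computations are correct (including $|\nabla\phi|^2+\phi^2\equiv a^2$ and the existence of an interior extremum because $\phi|_\Sigma$ is a single value).

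The genuine gap is in the globalization, precisely the step you flag as ``the main technical point.'' Your closing argument rests on $\rho\le r_*<\pi$ throughout $M$, but this is equivalent to $\phi\ge c$ on all of $M$, which you have not established: a priori $\phi$ could attain values below $c$ in the interior, i.e.\ there could be an interior minimum with $\phi=-a$ (and likewise other interior maxima with $\phi=a$, which would break the identification of $\rho$ with the distance to the single point $p$). Ruling these out, and then upgrading $\Sigma\subseteq\{\phi=c\}$ to $\Sigma=\partial B_{r_*}(p)$ and $M=\bar B_{r_*}(p)$, is the actual content of the rigidity and is missing. The paper closes the analogous gap for Theorem \ref{main:thm} by a cut-locus argument showing $\bar l=l_0=\mathrm{dist}_g(p,\Sigma)$ followed by a tangency/maximum-principle comparison of $H_{\partial B_{l_0}(p)}$ with $H_\Sigma$ --- a tool unavailable here since the Dirichlet statement carries no mean-curvature hypothesis. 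A workable substitute in your setting: take $q\in\Sigma$ realizing $l_0=\mathrm{dist}_g(p,\Sigma)$, note $a\cos l_0=c$ gives $l_0=r_*$, observe that $\{\phi=c\}$ is a regular level set (since $|\nabla\phi|^2=a^2-c^2>0$ there) containing both the connected geodesic sphere $\partial B_{r_*}(p)$ and the closed hypersurface $\Sigma$ as open-and-closed subsets; since they meet at $q$, $\partial B_{r_*}(p)\subseteq\Sigma$, and connectedness of $M$ then yields $M=\bar B_{r_*}(p)$. Without some argument of this kind the proof is not complete.
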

\begin{corollary}
Let $(M^n,g)$ be a connected compact Riemannian manifold with nonempty boundary $\Sigma$. Assume that $\text{Ric}_g\geq(n-1)g$ and $H_{\Sigma}\geq 0$. Then the first eigenvalue $\lambda_1(g)$ of $\Delta_g$ with Dirichlet boundary condition satisfies $\lambda_1(g)\geq n$. Moreover, $\lambda_1(g)=n$ if and only if $(M,g)$ is isometric to $(\mathbb{S}^n_+,g_0)$.
\end{corollary}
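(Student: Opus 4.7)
The plan is to apply Reilly's integral formula to a first Dirichlet eigenfunction, use the boundary and Ricci hypotheses to derive $\lambda_1 \geq n$, and then invoke the preceding theorem of Reichel to handle the rigidity case.

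Let $\phi$ be a first eigenfunction, so that $\Delta_g\phi = -\lambda_1\phi$ on $M$ and $\phi \equiv 0$ on $\Sigma$. Reilly's formula reads
\[
\int_M \bigl((\Delta_g\phi)^2 - |\nabla^2\phi|^2 - \text{Ric}_g(\nabla\phi,\nabla\phi)\bigr)\,dV = \int_\Sigma \bigl(2(\partial_\nu\phi)\,\Delta_\Sigma\phi + H_\Sigma(\partial_\nu\phi)^2 + II(\nabla^\Sigma\phi,\nabla^\Sigma\phi)\bigr)\,dA,
\]
where $II$ is the second fundamental form of $\Sigma$ and $\Delta_\Sigma$, $\nabla^\Sigma$ are its intrinsic Laplacian and gradient. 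Because $\phi \equiv 0$ on $\Sigma$, both $\nabla^\Sigma\phi$ and $\Delta_\Sigma\phi$ vanish identically on $\Sigma$, so the boundary integral reduces to $\int_\Sigma H_\Sigma(\partial_\nu\phi)^2\,dA$, which is nonnegative by the hypothesis $H_\Sigma \geq 0$.

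Combining this identity with the pointwise Cauchy--Schwarz bound $|\nabla^2\phi|^2 \geq (\Delta_g\phi)^2/n$ and the Ricci hypothesis $\text{Ric}_g(\nabla\phi,\nabla\phi) \geq (n-1)|\nabla\phi|^2$, I would obtain
\[
\tfrac{n-1}{n}\int_M (\Delta_g\phi)^2\,dV \;\geq\; (n-1)\int_M |\nabla\phi|^2\,dV + \int_\Sigma H_\Sigma(\partial_\nu\phi)^2\,dA.
\]
Integration by parts using $\phi \equiv 0$ on $\Sigma$ yields $\int_M(\Delta_g\phi)^2\,dV = \lambda_1^2\int_M\phi^2\,dV$ and $\int_M|\nabla\phi|^2\,dV = \lambda_1\int_M\phi^2\,dV$; substituting these and dropping the nonnegative boundary term forces $\lambda_1 \geq n$.

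If $\lambda_1 = n$, every inequality above must be an equality. In particular, saturation of the Cauchy--Schwarz step requires $\nabla^2\phi = (\Delta_g\phi/n)\,g = -\phi\,g$, so $\nabla^2\phi + \phi\,g = 0$ on $M$. Since $\phi$ is a nontrivial eigenfunction taking the constant value $0$ on $\Sigma$, it is nonconstant on $M$ and the hypotheses of the preceding theorem of Reichel are met, yielding the isometry $(M,g) \cong (\mathbb{S}^n_+, g_0)$. The converse is immediate from the known first Dirichlet eigenvalue of the hemisphere. The only step demanding real care is the reduction of the Reilly boundary integral: the Dirichlet condition must be used to kill the two terms that are not controlled by the sign of $H_\Sigma$, after which the argument is a routine Bochner-type calculation chained to Reichel's rigidity theorem.
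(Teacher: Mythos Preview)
Your argument is correct and is precisely the standard approach that the paper has in mind (the corollary is stated without proof in the paper, as a known consequence of Reilly's work; the paper's own Section 3 carries out the analogous computation for the mixed boundary condition in exactly the same way: Reilly's formula, the Cauchy--Schwarz inequality $|\nabla^2\phi|^2\geq(\Delta\phi)^2/n$, and then the rigidity theorem for the equality case). One correction: the theorem you invoke for the equality case is due to \emph{Reilly}, not ``Reichel''.
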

\begin{theorem}[\cite{Escobar, Xia}]
A connected compact Riemannian manifold $(M^n,g)$ with non-empty boundary  $\Sigma$ is isometric to  $(\mathbb{S}^n_+,g_0)$ if and only if \[
\begin{cases}
\nabla^2 \phi+\phi g=0\:\:\:\text{in}\:M,
\\
\frac{\partial}{\partial \nu}\phi=0\text{\:\:\:on\:} \Sigma.
\end{cases}
\]
\end{theorem}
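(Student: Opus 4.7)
The easy direction is immediate: on $\mathbb{S}^n_+ = \{x_{n+1}\ge 0\} \subset \mathbb{S}^n$, any coordinate function $\phi = x_i$ with $i \le n$ satisfies $\nabla^2 \phi + \phi g_0 = 0$ on $\mathbb{S}^n$ and has vanishing outward normal derivative along the equator. For the converse, the plan is to reduce to Theorem~\ref{thm:obata} by first showing that $\Sigma$ is already a round sphere, and then using the warped product structure forced by the Obata equation to recognize $M$ as a hemisphere. I begin by noting that $|\nabla \phi|^2 + \phi^2$ is constant on $M$: differentiating and substituting $\nabla^2\phi = -\phi g$ gives zero. After rescaling $\phi$ I may assume this constant equals $1$, so $|\phi|\leq 1$ on $M$.

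For $X, Y$ tangent to $\Sigma$, the Gauss formula $\nabla_X Y = \nabla^\Sigma_X Y + II(X,Y)\nu$ together with the Neumann condition $\nu\phi = 0$ yields
\[
(\nabla^\Sigma)^2(\phi|_\Sigma)(X,Y) \;=\; \nabla^2 \phi(X,Y) \;=\; -(\phi|_\Sigma)\, g_\Sigma(X,Y),
\]
so $\phi|_\Sigma$ satisfies Obata's equation on the closed manifold $\Sigma$. If $\phi|_\Sigma$ were constant, that constant would be forced to be zero by the equation; then $\nabla^\Sigma \phi = 0$ combined with $\nu\phi = 0$ would give $\nabla\phi\equiv 0$ on $\Sigma$, contradicting $|\nabla\phi|^2 + \phi^2 = 1$ there. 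Hence $\phi|_\Sigma$ is non-constant, and Theorem~\ref{thm:obata} applied to $\Sigma$ gives $(\Sigma, g|_\Sigma)\cong(\mathbb{S}^{n-1}, g_0)$.

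Next I would build the isometry with $\mathbb{S}^n_+$. Pick $p\in\Sigma$ where $\phi|_\Sigma$ attains its maximum; then $\phi(p) = 1 = \max_M \phi$, the tangential gradient at $p$ vanishes, and $\nu\phi(p) = 0$, so $\nabla\phi(p) = 0$. For any geodesic $\gamma$ of $M$ issuing from $p$, the function $h(t) = \phi(\gamma(t))$ satisfies $h'' = -h$ with $h(0) = 1$, $h'(0) = 0$, giving $h(t) = \cos t$. Setting $r = \arccos\phi$ on the open dense set $\{|\phi| < 1\}$, the Obata equation gives $|\nabla r| = 1$ and
\[
\nabla^2 r \;=\; \cot(r)\,(g - dr\otimes dr),
\]
so the level sets of $r$ are totally umbilic. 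Flowing along $-\nabla r$ then produces Fermi-type coordinates in which $g = dr^2 + \sin^2(r)\, h$ for a fixed metric $h$ on $\{\phi = 0\}$. Smoothness of $g$ at the critical values $r = 0, \pi$, together with the spherical identification of $\Sigma$ obtained above, will force $h$ to be the round metric on $\mathbb{S}^{n-1}_+$, so that the warped product is isometric to $(\mathbb{S}^n_+, g_0)$.

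The hardest step will be matching the $\phi$-level foliation to the boundary $\Sigma$. A more conceptual route is the doubling trick: provided $\Sigma$ is totally geodesic, the double $\tilde M = M \cup_\Sigma M$ carries a smooth metric, $\phi$ extends by reflection to a $C^2$ function $\tilde\phi$ satisfying the Obata equation on $\tilde M$, and Theorem~\ref{thm:obata} applied to $\tilde M$ gives $\tilde M \cong \mathbb{S}^n$, whence $M \cong \mathbb{S}^n_+$. Differentiating $\nu\phi = 0$ tangentially along $\Sigma$ and using $\nabla^2\phi(X,\nu) = -\phi g(X,\nu) = 0$ yields the identity $A_\Sigma(\nabla^\Sigma \phi) = 0$, where $A_\Sigma$ denotes the Weingarten map of $\Sigma$; this forces one principal direction of $A_\Sigma$ to vanish, but upgrading to $A_\Sigma \equiv 0$---or else bypassing this step by assembling the isometry directly from the warped product data and the boundary identification---is the technical crux of the argument.
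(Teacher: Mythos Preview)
The paper does not give a proof of this theorem; it is quoted as background from \cite{Escobar,Xia}, so there is no argument in the paper to compare yours against. Assessing your proposal on its own merits:

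Your restriction of the Obata equation to $\Sigma$ is correct and is the natural first move: the Neumann condition kills the second-fundamental-form correction in the tangential Hessian, so $\phi|_\Sigma$ satisfies $(\nabla^\Sigma)^2\phi+\phi\, g_\Sigma=0$, and your non-constancy argument via the first integral $|\nabla\phi|^2+\phi^2\equiv 1$ is clean. Theorem~\ref{thm:obata} then identifies each boundary component with a round $(n-1)$-sphere.

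But the proof is explicitly unfinished, as you yourself say. The identity $A_\Sigma(\nabla^\Sigma\phi)=0$ gives only \emph{one} null principal direction of the shape operator at each non-critical boundary point; it does not yield $A_\Sigma\equiv 0$, so the doubling trick is not available as stated. Your warped-product alternative is likewise only sketched: the critical point $p$ with $\phi(p)=1$ lies on $\Sigma$, so the level sets $\{r=\mathrm{const}\}$ are hypersurfaces \emph{with boundary} meeting $\Sigma$ orthogonally, and you would still need a lemma to the effect that smoothness of $dr^2+\sin^2(r)\,h$ at a \emph{boundary} cone point forces the cross-section $(N,h)$ to be a round hemisphere. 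Neither branch is completed; you label both ``the technical crux.'' What you have is a sound outline with the decisive step missing. To finish, you must either supply that cone-point smoothness lemma, or upgrade $A_\Sigma(\nabla^\Sigma\phi)=0$ to $A_\Sigma\equiv 0$ by some further argument (for instance exploiting the curvature identity $R(X,Y)\nabla\phi=(Y\phi)X-(X\phi)Y$, which follows from differentiating $\nabla^2\phi=-\phi g$ and constrains the ambient geometry along $\Sigma$), or carry out directly the analysis of geodesics issuing from $p$ into $M$ as in the cited references.
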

\begin{corollary}
Let $(M^n,g)$ be a connected compact Riemannian manifold with nonempty boundary $\Sigma$. Assume that $\text{Ric}_g\geq(n-1)g$ and $H_{\Sigma}\geq 0$. Then the first eigenvalue $\lambda_1(g)$ of $\Delta_g$ with Neumann boundary condition  satisfies $\lambda_1(g)\geq n$. Moreover $\lambda_1(g)=n$ if and only if $(M,g)$ is isometric to $(\mathbb{S}^n_+,g_0)$.
\end{corollary}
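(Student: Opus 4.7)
My plan is to prove the inequality $\lambda_1(g) \geq n$ via a Bochner--Reilly integral identity applied to a first Neumann eigenfunction, and to settle the rigidity case by invoking the Obata-type theorem of Escobar--Xia stated above.

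Let $\phi \in C^\infty(M)$ be a first non-constant Neumann eigenfunction, so that $\Delta_g\phi + \lambda_1 \phi = 0$ on $M$, $\partial_\nu\phi = 0$ on $\Sigma$, and $\int_M \phi\, dV_g = 0$. The opening step is to derive Reilly's identity adapted to the Neumann condition. Applying the Bochner formula to $\phi$, integrating over $M$, and using that $\nabla\phi$ is tangent to $\Sigma$ on $\Sigma$ (so that $\tfrac{1}{2}\partial_\nu|\nabla\phi|^2 = \mathrm{Hess}\,\phi(\nu,\nabla\phi) = -II(\nabla^\Sigma\phi,\nabla^\Sigma\phi)$, where $II$ denotes the second fundamental form) yields
\begin{equation*}
\int_M(\Delta_g\phi)^2\, dV_g = \int_M |\nabla^2\phi|^2\, dV_g + \int_M \mathrm{Ric}_g(\nabla\phi,\nabla\phi)\, dV_g + \int_\Sigma II(\nabla^\Sigma\phi,\nabla^\Sigma\phi)\, dA_g.
\end{equation*}

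Next I would combine three pointwise and spectral inputs: the Cauchy--Schwarz trace bound $|\nabla^2\phi|^2 \geq (\Delta_g\phi)^2/n$; the Ricci hypothesis $\mathrm{Ric}_g(\nabla\phi,\nabla\phi) \geq (n-1)|\nabla\phi|^2$; and the eigenvalue identities $\int_M(\Delta_g\phi)^2 = \lambda_1^2\int_M\phi^2$ and $\int_M|\nabla\phi|^2 = \lambda_1\int_M\phi^2$, where the latter is an integration by parts that uses the Neumann condition crucially. These combine to
\begin{equation*}
\frac{(n-1)\lambda_1(\lambda_1 - n)}{n}\int_M\phi^2\, dV_g \;\geq\; \int_\Sigma II(\nabla^\Sigma\phi,\nabla^\Sigma\phi)\, dA_g,
\end{equation*}
so once the right-hand side is shown to be non-negative under $H_\Sigma \geq 0$, one gets $\lambda_1 \geq n$. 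In the equality case, every pointwise inequality in the above chain must be saturated on $M$: in particular Cauchy--Schwarz equality forces $\nabla^2\phi = (\Delta_g\phi/n)\, g = -\phi\, g$, i.e.\ $\nabla^2\phi + \phi\, g = 0$ on $M$. Combined with $\partial_\nu\phi = 0$ on $\Sigma$, the Obata-type theorem of Escobar--Xia stated above identifies $(M,g)$ with $(\mathbb{S}^n_+, g_0)$.

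The main obstacle is establishing that the boundary integral $\int_\Sigma II(\nabla^\Sigma\phi,\nabla^\Sigma\phi)\, dA_g$ is non-negative from the mean-curvature hypothesis $H_\Sigma \geq 0$, since the pointwise quadratic form $II(\cdot,\cdot)$ is not controlled by its trace. My plan for this step is a doubling argument: reflect $M$ across $\Sigma$ to obtain the closed topological manifold $\tilde M = M \cup_\Sigma M$ equipped with the mirrored metric $\tilde g$, so that any Neumann eigenfunction on $M$ extends to a symmetric eigenfunction on $\tilde M$. The jump of the second fundamental form across $\Sigma$ has total trace $2H_\Sigma \geq 0$, which contributes a non-negative singular part to the distributional Ricci tensor of $\tilde g$; thus the weak Lichnerowicz estimate applies on $\tilde M$ and delivers $\lambda_1 \geq n$. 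The rigidity case then follows because saturation of Lichnerowicz in the symmetric sector on $\tilde M$ reduces precisely to the Obata system of Escobar--Xia on $M$ analyzed above. The alternative to the doubling reduction is to process the boundary integral intrinsically on $\Sigma$ using the Codazzi equation together with the decomposition $\Delta_g = \Delta_\Sigma + H_\Sigma \partial_\nu + \mathrm{Hess}\,\phi(\nu,\nu)$ on $\Sigma$, converting the full tensor $II$ into its trace $H_\Sigma$ weighted against non-negative eigenfunction data before the mean-convexity hypothesis is applied.
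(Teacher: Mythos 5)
Your Reilly-identity setup is correct, and it is the same mechanism the paper itself uses in Section 3 for the mixed-condition analogue (the paper offers no proof of this particular corollary, only the citations to Escobar and Xia). With the Neumann condition $\partial_\nu\phi=0$ the boundary contribution is exactly $\int_\Sigma h_\Sigma(\nabla^\Sigma\phi,\nabla^\Sigma\phi)\,d\sigma$, your inequality chain is right, and the reduction of the equality case to the Obata system $\nabla^2\phi+\phi g=0$, $\partial_\nu\phi=0$ followed by the Escobar--Xia rigidity theorem is fine. The genuine gap is the step you yourself flag as the main obstacle, and your proposed fix does not work. When you double $M$ across $\Sigma$, the singular (delta-measure) part of the distributional curvature of $\tilde g$ along $\Sigma$ is governed by the full jump of the second fundamental form: in Fermi coordinates $g=dt^2+g_t$ the components $R_{itjt}$ acquire a delta term proportional to $h_{ij}$, so the tangential part of the distributional Ricci tensor has singular part proportional to $2h_{ij}\,\delta_\Sigma$, not to its trace. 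Only the distributional \emph{scalar} curvature is controlled by $2H_\Sigma$. Hence under $H_\Sigma\ge 0$ alone the doubled space need not carry any distributional lower Ricci bound, and the ``weak Lichnerowicz estimate'' you want to invoke (itself heavy machinery you would have to justify for a metric that is merely Lipschitz across $\Sigma$) has no hypothesis to run on. The same obstruction defeats your sketched alternative: the decomposition of $\Delta_g$ along $\Sigma$ and the Codazzi equation do not convert the quadratic form $h_\Sigma(\nabla^\Sigma\phi,\nabla^\Sigma\phi)$ into an expression weighted only by $H_\Sigma$.

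As written, your argument closes only under the stronger hypothesis $h_\Sigma\ge 0$ (weakly convex boundary), in which case the boundary term is pointwise non-negative and both the main chain and the rigidity step go through immediately; that is also the hypothesis under which the classical Reilly and $P$-function arguments (with $P=|\nabla\phi|^2+\tfrac{\lambda_1}{n}\phi^2$, whose normal derivative at a boundary maximum is $-2h_\Sigma(\nabla^\Sigma\phi,\nabla^\Sigma\phi)$) operate. To prove the corollary as literally stated, with only $H_\Sigma\ge 0$, you need a genuinely new idea for the boundary term, and neither of your two proposed routes supplies one.
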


Our main result extends the above to a mixed boundary condition, under certain hypothesis on the boundary mean curvature:

\begin{theorem}\label{main:thm}
Let $\theta\in (0,\frac{\pi}{2})$. A connected compact Riemannian manifold $(M^n,g)$ with non-empty $\Sigma$ satifying $H_{\Sigma}\geq (n-1)\tan(\theta)$ is isometric to a geodesic ball in $(\mathbb{S}^n,g_0)$, whose boundary is a totally umbilical hypersphere with mean curvature $(n-1)\tan(\theta)$, if and only if there is a non-constant function $\phi$ satisfying
\[
\begin{cases}
\nabla^2 \phi+\phi g=0\:\:\:\text{in}\:M,
\\
\tan(\theta)\frac{\partial}{\partial \nu}\phi+\phi=0\text{\:\:\:on\:} \Sigma.
\end{cases}
\]
\end{theorem}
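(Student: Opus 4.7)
For the ``only if'' direction I take $\phi(q) = \cos d(p_0, q)$ on the geodesic ball $B_{\pi/2 - \theta}(p_0) \subset \mathbb{S}^n$: direct computation gives the Hessian equation, while on the boundary $\phi \equiv \sin\theta$ and $\partial_\nu\phi \equiv -\cos\theta$, so $\tan\theta\,\partial_\nu\phi + \phi = 0$, and the boundary mean curvature $(n-1)\cot(\pi/2 - \theta) = (n-1)\tan\theta$ matches the hypothesis. For the converse my plan is an Obata-type interior argument that forces the round spherical local structure, followed by a single integration by parts on the closed hypersurface $\Sigma$ which, combined with $H_\Sigma \ge (n-1)\tan\theta$, pins down $\Sigma$ as the correct geodesic sphere.

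\emph{Interior step.} Differentiating the Hessian equation shows $|\nabla\phi|^2 + \phi^2$ is constant on $M$, and after rescaling I take it to be $1$. The Robin condition forces $\phi \le 0$ at any boundary maximum (where $\partial_\nu\phi \ge 0$), so non-constancy together with the symmetry $\phi \mapsto -\phi$ produces an interior point $p_0$ with $\phi(p_0) = 1$ and $\nabla\phi(p_0) = 0$. Along each unit-speed geodesic $\gamma$ from $p_0$, the Hessian equation yields $(\phi\circ\gamma)'' + (\phi\circ\gamma) = 0$ with the above initial data, hence $\phi = \cos d(p_0, \cdot)$. Substituting this into the Hessian equation in geodesic polar coordinates $(s, \omega)$ about $p_0$ uniquely determines the metric as $ds^2 + \sin^2(s)\,g_{\mathbb{S}^{n-1}}$, identifying $M$ with a star-shaped domain in the round sphere centered at $p_0$.

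\emph{Boundary step.} Decomposing $\Delta\phi = \nabla^2\phi(\nu, \nu) + \Delta_\Sigma\phi + H_\Sigma\,\partial_\nu\phi$ and using $\Delta\phi = -n\phi$, $\nabla^2\phi(\nu, \nu) = -\phi$, $\partial_\nu\phi = -\cot\theta\,\phi$ yields
\[
\Delta_\Sigma\phi = \phi\bigl(H_\Sigma \cot\theta - (n-1)\bigr), \qquad |\nabla^\Sigma\phi|^2 = 1 - \phi^2\csc^2\theta.
\]
Since $\Sigma$ is closed, $\int_\Sigma \phi\,\Delta_\Sigma\phi\,dA = -\int_\Sigma |\nabla^\Sigma\phi|^2\,dA$. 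Substituting the two identities and invoking $H_\Sigma \ge (n-1)\tan\theta$ gives
\[
\int_\Sigma \phi^2\,dA \ge \sin^2\theta\,\mathrm{Area}(\Sigma),
\]
while the reverse inequality is immediate since $|\nabla^\Sigma\phi|^2 \ge 0$ means $\phi^2 \le \sin^2\theta$ pointwise. Equality throughout forces $\phi^2 \equiv \sin^2\theta$ on $\Sigma$ and $H_\Sigma \equiv (n-1)\tan\theta$; by continuity $\phi$ is $\sin\theta$ or $-\sin\theta$ on each component of $\Sigma$. The value $-\sin\theta$ is incompatible: star-shapedness makes the corresponding component the geodesic sphere $\{s = \pi/2 + \theta\}$ with $\nu = +\partial_s$, giving $\partial_\nu\phi = -\cos\theta$ and $\tan\theta\,\partial_\nu\phi + \phi = -2\sin\theta \ne 0$ for $\theta \in (0, \pi/2)$. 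Hence $\phi \equiv \sin\theta$ on $\Sigma$, so $\Sigma = \{d(p_0, \cdot) = \pi/2 - \theta\}$ and $M$ is the enclosed geodesic ball.

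The most delicate step is the globalization in the interior argument: Obata's proof provides the spherical local structure only where geodesic polar coordinates about $p_0$ are valid, and I must verify that the radial exponential map from $p_0$ remains a diffeomorphism all the way out to $\Sigma$, so that $M$ is genuinely identified with a star-shaped domain in $\mathbb{S}^n$. Granted this, the rigidity follows from the single integration-by-parts identity outlined above.
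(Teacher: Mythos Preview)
Your argument is correct, and your boundary step is genuinely different from the paper's. Both proofs share the interior Obata step---locating an interior maximum $p_0$ with $\phi(p_0)=1$, deducing $\phi=\cos d(p_0,\cdot)$ along radial geodesics, and identifying the metric with the round one on the maximal geodesic ball about $p_0$---and both must confront the globalization issue you flag at the end (the paper handles it by showing that the radius $\bar l$ up to which $\exp_{p_0}$ is a diffeomorphism equals $l_0=\operatorname{dist}(p_0,\Sigma)$).

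Where the two diverge is in pinning down $\Sigma$. The paper argues via a touching lemma: taking $q\in\Sigma$ realizing $l_0=\operatorname{dist}(p_0,\Sigma)$, the Robin condition at $q$ (where the minimizing geodesic meets $\Sigma$ orthogonally) forces $l_0=\pi/2-\theta$, so $\partial B_{l_0}(p_0)$ has mean curvature exactly $(n-1)\tan\theta$; since $\Sigma$ lies outside $B_{l_0}(p_0)$, is tangent to $\partial B_{l_0}(p_0)$ at $q$, and has $H_\Sigma\ge(n-1)\tan\theta=H_{\partial B_{l_0}(p_0)}$, the strong maximum principle for mean curvature forces $\Sigma=\partial B_{l_0}(p_0)$. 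Your route is analytic and avoids the maximum principle entirely: the conserved quantity $|\nabla\phi|^2+\phi^2\equiv 1$ together with the Robin condition gives the pointwise bound $\phi^2\le\sin^2\theta$ on $\Sigma$, while integrating $\phi\,\Delta_\Sigma\phi$ and invoking $H_\Sigma\ge(n-1)\tan\theta$ gives the reverse inequality in the mean; squeezing yields $\phi^2\equiv\sin^2\theta$ and $H_\Sigma\equiv(n-1)\tan\theta$ simultaneously. The paper's approach uses the Robin condition only at the single touching point and leans on a geometric comparison tool; yours uses the Robin condition globally on $\Sigma$ and is purely integral, delivering the mean-curvature rigidity as a byproduct. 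Note also that once your integral argument shows $\phi$ is constant on each component of $\Sigma$, you are effectively in Reilly's setting, so one could alternatively finish by citing that result rather than completing the Obata globalization by hand.
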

As an application, we are able to complete the study initiated by Xin'an and Hongwei \cite{XinHongwei} on the first eigenvalue of the Laplacian operator with mixed boundary condition. Combining the inequality proved in \cite{XinHongwei} with our rigidity statement, we obtain

\begin{theorem}\label{main:application}
Let $(M^n,g)$ be a connected compact Riemannian manifold with nonempty boundary $\Sigma$ whose 2nd fundamental form and mean curvature are denoted by $h_{\Sigma}$ and $H_{\Sigma}=\text{tr}\,h_{\Sigma}$ respectively. If for some $\theta\in (0,\frac{\pi}{2})$,
$$\text{Ric}_g\geq (n-1)g,  \:\:\:h_{\Sigma}\geq-2\cot(\theta)g \:\:\text{and}\:\: H_{\Sigma}\geq(n-1)\tan(\theta),$$ 
then the first eigenvalue $\lambda_1(g,\theta)$ of the problem
\begin{equation}\label{eigenvalue:problem}
\begin{cases}
\Delta_g\phi+\lambda\phi=0&\text{in}\:M,
\\
\tan(\theta)\frac{\partial}{\partial\nu}\phi+\phi=0&\text{on}\:\Sigma,
\end{cases}
\end{equation}
satisfies $\lambda_1(g,\theta)\geq n$. Moreover, the equality $\lambda_1(g,\theta)=n$ holds if and only if $M$ is isometric to an $n$-dimensional spherical cap of $(\mathbb{S}^n, g_0)$, whose boundary is a totally umbilical hypersphere with mean curvature $(n-1)\tan(\theta)$.
\end{theorem}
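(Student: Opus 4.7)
The lower bound $\lambda_1(g,\theta) \geq n$ is the content of the main inequality of \cite{XinHongwei}, so only the rigidity statement at equality remains. My plan is to extract from equality in the chain of inequalities underlying Reilly's formula that any first eigenfunction $\phi$ satisfies the Obata equation $\nabla^2\phi + \phi g = 0$ together with the mixed boundary condition, and then to invoke Theorem~\ref{main:thm}.

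Assume $\lambda_1(g,\theta) = n$ with eigenfunction $\phi$, which is automatically non-constant since $\Delta_g\phi = -n\phi$ with $n>0$. Write $\bar\phi = \phi|_\Sigma$ and $u = \partial_\nu\phi|_\Sigma$, so the boundary condition reads $\bar\phi = -\tan(\theta)u$. Reilly's formula gives
$$\int_M \bigl[(\Delta_g\phi)^2 - |\nabla^2\phi|^2 - \text{Ric}_g(\nabla\phi,\nabla\phi)\bigr] dV_g = \int_\Sigma \bigl[2u\,\Delta_\Sigma\bar\phi + H_\Sigma u^2 + h_\Sigma(\nabla_\Sigma\bar\phi,\nabla_\Sigma\bar\phi)\bigr] dA_g.$$
For the left-hand side, the refined Cauchy-Schwarz inequality $|\nabla^2\phi|^2 \geq (\Delta_g\phi)^2/n$ together with $\text{Ric}_g \geq (n-1)g$ bounds the integrand from above by $(n-1)(n\phi^2 - |\nabla\phi|^2)$. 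Using the divergence identity $\int_M |\nabla\phi|^2 dV_g = n\int_M \phi^2 dV_g - \tan(\theta)\int_\Sigma u^2 dA_g$ (in which the boundary contribution is simplified by the condition $\bar\phi = -\tan(\theta) u$), the bulk $\phi^2$ terms cancel and one is left with
$$\text{LHS} \leq (n-1)\tan(\theta)\int_\Sigma u^2 dA_g.$$

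For the right-hand side, substitute $\nabla_\Sigma\bar\phi = -\tan(\theta)\nabla_\Sigma u$ and $\Delta_\Sigma\bar\phi = -\tan(\theta)\Delta_\Sigma u$, then integrate by parts on the closed manifold $\Sigma$:
$$\text{RHS} = \int_\Sigma \bigl[\tan(\theta)\bigl(2g_\Sigma + \tan(\theta)h_\Sigma\bigr)(\nabla_\Sigma u,\nabla_\Sigma u) + H_\Sigma u^2\bigr] dA_g.$$
The condition $h_\Sigma \geq -2\cot(\theta)g_\Sigma$ is precisely what renders the quadratic form $2g_\Sigma + \tan(\theta)h_\Sigma$ positive semi-definite, so the first term is non-negative; combined with $H_\Sigma \geq (n-1)\tan(\theta)$, this yields $\text{RHS} \geq (n-1)\tan(\theta)\int_\Sigma u^2 dA_g$.

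Since Reilly's identity forces LHS $=$ RHS, both chains of inequalities collapse to equalities. The equality case of the pointwise Cauchy-Schwarz step gives $\nabla^2\phi + \phi g = 0$ throughout $M$, while $\phi$ continues to satisfy the mixed boundary condition by construction. Theorem~\ref{main:thm} then identifies $(M,g)$ with the geodesic ball in $(\mathbb{S}^n,g_0)$ whose totally umbilical boundary has mean curvature $(n-1)\tan(\theta)$, which is the claimed rigidity. The main technical point I expect to wrestle with is the algebraic bookkeeping that arranges the LHS upper bound and the RHS lower bound to meet at the common multiple of $\int_\Sigma u^2 dA_g$; the three curvature hypotheses in the statement are tailored precisely to make this match work, after which the invocation of Theorem~\ref{main:thm} is immediate.
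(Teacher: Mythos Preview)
Your proposal is correct and follows essentially the same route as the paper: derive the Xin'an--Hongwei inequality from Reilly's formula, read off the equality conditions (in particular $|\nabla^2\phi|^2=(\Delta\phi)^2/n$, hence $\nabla^2\phi+\phi g=0$), and invoke Theorem~\ref{main:thm}. The only cosmetic difference is that the paper parametrizes the boundary terms via $z=\phi|_\Sigma$ and arrives at $2\cot(\theta)|\bar\nabla z|^2+h_\Sigma(\bar\nabla z,\bar\nabla z)\geq 0$, whereas you substitute $\bar\phi=-\tan(\theta)u$ first and obtain the equivalent condition $(2g_\Sigma+\tan(\theta)h_\Sigma)(\nabla_\Sigma u,\nabla_\Sigma u)\geq 0$; the two are related by the factor $\tan^2(\theta)$ and lead to the same equality analysis.
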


Our contribution is the equality part, since the inequality was proved in \cite{XinHongwei}. 

In Section 2 we prove Theorem \ref{main:thm} and in Section 3 we prove the rigidity part of Theorem \ref{main:application}


\section{Proof of Theorem \ref{main:thm}}
Without loss of generality, we assume that $\phi(p)=\max\limits_{M}\phi=1$ for some $p\in M$. Since $\tan(\theta)>0$, it follows from  the second equation in \eqref{eigenvalue:problem} that $p\notin\Sigma$. 

Let $\gamma:[0,l_0]\rightarrow M$ be a geodesic such that $|\gamma'|=1$, $\gamma(0)=p$. Define $u(t)=\phi\circ \gamma(t)$. Since $\nabla^2 \phi+\phi g=0$, we obtain the o.d.e.
\begin{equation*}
\begin{cases}
u''+u=0,
\\
u(0)=1, \:\:u'(0)=0.
\end{cases}
\end{equation*}
Hence, $u(t)=\cos t$ in $[0,l_0]$.
Since $M$ is compact, we can choose $\gamma$ in such a way that $l_0=dist_g(p,\Sigma)=dist_g(p,q)$, where $q=\gamma(l_0)\in \Sigma$. 

Set
\[
\overline{l}=\sup \{l\in (0,l_0]\,|\,\exp_{p}: B_l(p)\subset M\rightarrow B_l(0)\subset \mathbb{R}^n \,\,\mbox{is a diffeomorphism} \}\,.
\]
Observe that $\overline{l}>0$, because the exponential map $\exp_{p}: B_l(p)\subset M\rightarrow B_l(0)\subset \mathbb{R}^n$ is a diffeomorphism for small $l$. 
Now, taking any point $p_0\in \mathbb{S}^n$ and using the exponential map on $S^n$ at $p_0$, we obtain a diffeomorphism  
$$
\psi: B_l(p)\subset M\rightarrow B_l(p_0)\subset \mathbb{S}^n$$ 
for any $0<l<\overline{l}$ with $l<\pi$. As in the proof of the classical Obata's Theorem \cite[pp.338-339]{O}, we can prove that $\psi$ is an isometry. So $\overline{l}<\pi$, otherwise $M$ is closed which is a contradiction.

It follows from definition that $\overline{l}\leq l_0$. We claim that $\overline{l}=l_0$. Suppose by contradiction $\overline{l}< l_0$. Then there exists a geodesic starting at $p$ with a cut-locus point in $\bar  B_{\overline{l}}(p)$. Hence, we see that $\overline{l}\geq\pi$ which is a contradiction.
Thus, $\overline{l}=l_0$.

\vspace{0.1cm}
\emph{Claim:} We have $H_{\partial B_{l_0}(p)}=\tan(\theta)(n-1)$.

Since $\psi$ is an isometry, it is enough to prove that $\overline l=\pi/2-\theta$. Note that $u(t)=\cos(t)$ implies
\begin{equation}\label{eq:1}
-\tan(\theta)\frac{\partial}{\partial \nu}\phi(q)=-\tan(\theta)u'(\overline{l})=\tan(\theta)\sin (\overline{l})
\end{equation}
because $\gamma$ intersects $\Sigma$ orthogonally at $\gamma(\overline{l})$.
On the other hand, it holds
\begin{equation}\label{eq:2}
\phi(q)=u(\overline{l})=\cos(\overline{l})
\end{equation}
It follows from the second equation in \eqref{eigenvalue:problem} that the l.h.s. of \eqref{eq:1} and \eqref{eq:2} equal, so 
$$
\tan(\theta)=\cot(\overline{l})=\tan(\pi/2-\overline{l}).
$$
Now, because $\tan$ is an injective function in $(-\pi/2,\pi/2)$, we have $\overline{l}=\pi/2-\theta$.
This proves the claim.

We are now able to finish the proof of Theorem \ref{main:thm}. Using $\overline{l}=l_0=dist_g(p,q)$ we obtain $\Sigma \subset M\setminus B_{l_0}(p)$ and $q\in \partial B_{l_0}(p)\cap \Sigma$. Using $H_{\partial B_{l_0}(p)}=\tan(\theta)(n-1)\leq H_{\Sigma}$ and the maximum principle, we conclude that $\partial B_{l_0}(p)=\Sigma$ and, consequently, $M=\bar B_{l_0}(p)$ which is isometric to $\bar B_{l_0}(p_0)\subset \mathbb{S}^n$.



\section{The rigidity part in Xin'an-Hongwei's theorem}

\begin{theorem}[\cite{XinHongwei}]\label{thm:XinHongwei}
\footnote{
Although our hypothesis on $h_{\Sigma}$ are weaker than stated in \cite{XinHongwei} (which assumes $h_{\Sigma}\geq 0$), the proof of Theorem \ref{thm:XinHongwei} follows exactly the same lines as \cite{XinHongwei}.
}
Let $(M^n,g)$ be a connected compact Riemannian manifold with nonempty boundary $\Sigma$. If for some $\theta\in (0,\frac{\pi}{2})$
$$\text{Ric}_g\geq (n-1)g,  \:\:\:h_{\Sigma}\geq-2\cot(\theta)g \:\:\text{and}\:\: H_{\Sigma}\geq(n-1)\tan(\theta),$$ 
then the first eigenvalue of problem \eqref{eigenvalue:problem} satisfies 
$\lambda_1(g,\theta)\geq n$. 
\end{theorem}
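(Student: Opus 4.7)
The plan is to apply Reilly's integral identity to a first eigenfunction $\phi$ of problem \eqref{eigenvalue:problem} and extract the inequality $\lambda \geq n$ from a favorable combination of the three curvature hypotheses. Writing $\lambda = \lambda_1(g,\theta)$, Reilly's formula reads
\[
\int_M \left[(\Delta_g \phi)^2 - |\nabla^2\phi|^2 - \text{Ric}_g(\nabla\phi,\nabla\phi)\right]dV_g = \int_\Sigma \left[H_\Sigma (\partial_\nu\phi)^2 + 2(\partial_\nu\phi)\Delta_\Sigma\phi + h_\Sigma(\nabla^\Sigma\phi,\nabla^\Sigma\phi)\right]dA_g,
\]
where $\nabla^\Sigma$ and $\Delta_\Sigma$ denote the intrinsic gradient and Laplacian on $\Sigma$. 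Into this I would substitute $\Delta_g\phi = -\lambda\phi$ and the Robin condition $\partial_\nu\phi = -\cot(\theta)\phi$ on $\Sigma$, then estimate each side separately.

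For the left-hand side, the pointwise Cauchy-Schwarz inequality $|\nabla^2\phi|^2 \geq (\Delta_g\phi)^2/n$ together with $\text{Ric}_g \geq (n-1)g$ yields the upper bound
\[
\tfrac{n-1}{n}\lambda^2\!\int_M\!\phi^2\,dV_g \;-\; (n-1)\!\int_M\!|\nabla\phi|^2\,dV_g.
\]
Green's identity combined with the Robin condition gives $\int_M|\nabla\phi|^2\,dV_g = \lambda\int_M\phi^2\,dV_g - \cot(\theta)\int_\Sigma\phi^2\,dA_g$, so the left-hand side of Reilly's identity is bounded above by
\[
\tfrac{n-1}{n}\lambda(\lambda-n)\!\int_M\!\phi^2\,dV_g \;+\; (n-1)\cot(\theta)\!\int_\Sigma\!\phi^2\,dA_g.
\]

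For the right-hand side, the first term reduces to $\cot^2(\theta)\int_\Sigma H_\Sigma\phi^2\,dA_g \geq (n-1)\cot(\theta)\int_\Sigma\phi^2\,dA_g$ thanks to the hypothesis on $H_\Sigma$. The middle term, after applying the Robin condition and integrating by parts on the closed manifold $\Sigma$, becomes $+2\cot(\theta)\int_\Sigma|\nabla^\Sigma\phi|^2\,dA_g$, which is exactly canceled by the lower bound on the third term coming from $h_\Sigma \geq -2\cot(\theta)g$. Putting everything together, Reilly's identity collapses to
\[
\tfrac{n-1}{n}\lambda(\lambda-n)\!\int_M\!\phi^2\,dV_g \;\geq\; 0,
\]
and since $\lambda>0$ (multiply the eigenvalue equation by $\phi$ and use $\cot\theta>0$), we conclude $\lambda\geq n$.

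The main obstacle is the delicate bookkeeping that makes all boundary terms cooperate: the coefficient $2\cot(\theta)$ produced by integration by parts against $\Delta_\Sigma$ matches exactly the hypothesis $h_\Sigma\geq -2\cot(\theta)g$, while the factor $\cot^2(\theta)$ multiplying $H_\Sigma$ combines with $H_\Sigma \geq (n-1)\tan(\theta)$ to produce the very boundary integral $(n-1)\cot(\theta)\int_\Sigma\phi^2\,dA_g$ that arises from Green's identity on the bulk side. This interlocking of the three curvature assumptions, rather than any single inequality, is what forces the conclusion; getting the signs right for the Robin trace is the only technical pitfall.
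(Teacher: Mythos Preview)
Your argument is correct and coincides with the paper's proof: both apply Reilly's formula to a first eigenfunction, use $|\nabla^2\phi|^2\geq(\Delta_g\phi)^2/n$ and $\text{Ric}_g\geq(n-1)g$ on the interior, and then observe that the Robin condition $\partial_\nu\phi=-\cot(\theta)\phi$ makes the boundary terms combine precisely so that $h_\Sigma\geq-2\cot(\theta)g$ and $H_\Sigma\geq(n-1)\tan(\theta)$ yield $\frac{n-1}{n}\lambda(\lambda-n)\int_M\phi^2\geq 0$. The paper's presentation differs only cosmetically, writing the boundary integrand as $(\bar\Delta z+H_\Sigma u)u-\langle\bar\nabla z,\bar\nabla u\rangle+h_\Sigma(\bar\nabla z,\bar\nabla z)$ with $u=\partial_\nu\phi$ and $z=\phi|_\Sigma$, which after integration by parts on $\Sigma$ matches your formulation exactly.
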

\begin{proof}
As in \cite[p.227]{XinHongwei}, setting $u=\frac{\d f}{\d \nu}\Big|_{\Sigma}$ and $z=f|_{\Sigma}$, and using \cite{Re1}, we obtain
\begin{align*}
\int_M((\Delta f)^2-|\nabla^2f|^2)dv_g
=&\int_{\Sigma}\big((\bar\Delta z+H_{\Sigma}u)u-<\bar\nabla z,\bar\nabla u>+h_{\Sigma}(\bar\nabla z,\bar\nabla z)\big)d\sigma
\\
&+\int_M\text{Ric}_g(\nabla f,\nabla f)dv_g
\end{align*}
where the bars stand for the restrictions to $\Sigma$. If $f$ satisfies
$\Delta_gf+\lambda_1(g,\theta)f=0$ and $u+z\cot(\theta)=0\:\: \text{on}\:\Sigma$, using $\text{Ric}_g\geq (n-1)g$ we obtain
\begin{align*}
\frac{n-1}{n}\lambda_1(g,\theta)(\lambda_1(g,\theta)-n)\int_Mf^2dv_g
\geq&\int_{\Sigma}(2\cot(\theta)|\bar\nabla z|^2+h_{\Sigma}(\bar\nabla z,\bar\nabla z))d\sigma
\\
&+\int_{\Sigma}(H_{\Sigma}-(n-1)\tan(\theta))u^2d\sigma.
\end{align*}
From the hypotheses on $h_{\Sigma}$ and $H_{\Sigma}$ we conclude that 
$\lambda_1(g,\theta)(\lambda_1(g,\theta)-n)\geq0$.
\end{proof}
As a consequence of Theorem \ref{main:thm} we prove:
\begin{theorem}
Equality  $\lambda_1(g,\theta)=n$  in Theorem \ref{thm:XinHongwei} holds if and only if $M$ is isometric to an $n$-dimensional spherical cap of $\mathbb{S}^n$, whose boundary is a totally umbilical hypersphere with mean curvature $(n-1)\tan(\theta)$.
\end{theorem}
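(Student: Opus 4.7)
My plan is to reduce the rigidity assertion directly to the Obata-type Theorem~\ref{main:thm} by extracting its hypotheses from the equality case of the spectral estimate of Theorem~\ref{thm:XinHongwei}. The ``if'' direction I would handle first by an explicit model computation: on the geodesic ball $\overline{B}_{\pi/2-\theta}(p_0)\subset\mathbb{S}^n$, the function $\phi(x)=\cos(d_{\mathbb{S}^n}(x,p_0))$ satisfies $\nabla^2\phi+\phi g_0=0$ on all of $\mathbb{S}^n$, hence $\Delta_{g_0}\phi+n\phi=0$; a short computation at the distance sphere of radius $\pi/2-\theta$ gives $\phi=\sin\theta$ and $\partial_\nu\phi=-\cos\theta$, so that $\tan(\theta)\partial_\nu\phi+\phi=0$. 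This exhibits $n$ as an eigenvalue of the mixed problem on the cap, and combined with Theorem~\ref{thm:XinHongwei} it forces $\lambda_1(g_0,\theta)=n$.

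For the converse, I would assume $\lambda_1(g,\theta)=n$, pick a corresponding eigenfunction $f$, and revisit the derivation of the displayed inequality in the proof of Theorem~\ref{thm:XinHongwei}. Three independent ingredients enter that argument: the pointwise Bochner--Lichnerowicz inequality $|\nabla^2 f|^2\ge \frac{1}{n}(\Delta f)^2$, the Ricci bound $\mathrm{Ric}_g\ge(n-1)g$, and the algebraic boundary inequalities coming from $h_\Sigma\ge-2\cot(\theta)g$ and $H_\Sigma\ge(n-1)\tan\theta$. When $\lambda_1(g,\theta)=n$, the left-hand side of that inequality vanishes while each term on the right is a nonnegative integrand, so all three ingredients must saturate; in particular, pointwise equality in Bochner--Lichnerowicz forces $\nabla^2 f=\frac{\Delta f}{n}g=-fg$, that is $\nabla^2 f+fg=0$ on $M$.

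At this point $f$ is nonconstant (as $\lambda_1(g,\theta)=n>0$) and satisfies the Obata equation together with the mixed boundary condition $\tan(\theta)\partial_\nu f+f=0$ on $\Sigma$, while $H_\Sigma\ge(n-1)\tan\theta$ is already one of the standing hypotheses. Thus $f$ fulfils the assumptions of Theorem~\ref{main:thm}, which then identifies $(M,g)$ with the desired spherical cap of $\mathbb{S}^n$ whose boundary is a totally umbilical hypersphere of mean curvature $(n-1)\tan\theta$. The only delicate step of this plan is the bookkeeping required to rewrite the Reilly-type identity of \cite{XinHongwei} as a saturated sum of sign-definite pieces (using the substitution $u=-z\cot\theta$ on $\Sigma$), so that the equality case yields the Obata equation unambiguously; once that is done, all of the geometric rigidity is outsourced to Theorem~\ref{main:thm}, which is where the true novelty of the paper resides.
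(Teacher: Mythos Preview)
Your proposal is correct and follows essentially the same route as the paper: from equality in the Reilly-type inequality you extract saturation of each nonnegative piece, in particular $|\nabla^2 f|^2=\frac{1}{n}(\Delta f)^2$, hence $\nabla^2 f=\frac{\Delta f}{n}g=-fg$, and then invoke Theorem~\ref{main:thm}. The only addition is that you spell out the easy ``if'' direction via the explicit model function $\cos d_{\mathbb{S}^n}(\cdot,p_0)$ on the cap, which the paper leaves implicit.
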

\begin{proof}
Note from the previous proof that the equality holds if and only if the following four conditions hold:

\begin{itemize}
\item[(i)] $\text{Ric}_g(\nabla f, \nabla f)=(n-1)|\nabla f|^2$;
\item[(ii)] $H_{\Sigma}=(n-1)\tan(\theta)$;
\item[(iii)] $2\cot(\theta)|\bar\nabla z|^2+h_{\Sigma}(\bar\nabla z,\bar\nabla z)=0$;
\item[(iv)] $|\nabla^2f|^2=\frac{(\Delta f)^2}{n}$.
\end{itemize}
We can see that (iv) is equivalent to
\[
\nabla^2f=\frac{\Delta f}{n}g\,.
\]
Then we apply Theorem \ref{main:thm} to get the result.
\end{proof}




\end{document}